\newtheorem{theorem}{Theorem}
\newtheorem{question}[theorem]{Question}
\newtheorem{proposition}[theorem]{Proposition}
\theoremstyle{definition}
\newcommand{\ZZ}{\mathbb{Z}}
\newcommand{\cB}{\mathcal{B}}
\newcommand{\la}{\langle}
\newcommand{\ra}{\rangle}
\newcommand{\tG}{\tilde{G}}
\newcommand{\CT}{\mathrm{CT}}
\renewcommand{\wr}{\mathop{\rm wr}}
\newcommand{\att}{\mathop{{\rm att}}}
\newcommand{\rad}{\mathop{{\rm rad}}}
\newcommand{\Dart}{\mathop{{\rm Dart}}}
\newcommand{\Aut}{\mathrm{Aut}}
\newcommand{\Alt}{\mathrm{Alt}}
\title[Radius and attachment number]{On the radius and the attachment number of tetravalent half-arc-transitive graphs}
\author[P. Poto\v{c}nik]{Primo\v{z} Poto\v{c}nik}
\address{Primo\v{z} Poto\v{c}nik,\newline Faculty of Mathematics and Physics,
University of Ljubljana, Slovenia; \newline
also affiliated with\newline
Institute of Mathematics, Physics, and
  Mechanics, Ljubljana, Slovenia}\email{primoz.potocnik@fmf.uni-lj.si}
\author[P. \v{S}parl]{Primo\v{z} \v{S}parl$^*$}
\address{Primo\v{z} \v{S}parl, \newline Faculty of Education,
University of Ljubljana, Slovenia; \newline
also affiliated with\newline
 IAM, University of Primorska, Koper, Slovenia and\newline
Institute of Mathematics, Physics, and
  Mechanics, Ljubljana, Slovenia}\email{primoz.sparl@pef.uni-lj.si}
\thanks{* corresponding author}
\subjclass[2010]{05C25, 20B25}
\keywords{graph, half-arc-transitive, radius, attachment number, split cover} 
\begin{document}

\begin{abstract}
In this paper, we study the relationship between the radius $r$ and the attachment number $a$ of a tetravalent graph admitting a half-arc-transitive group of automorphisms.
These two parameters were first introduced in~[{\em J.~Combin.~Theory Ser.~B} {73} (1998), 41--76], where
among other things it was proved that $a$ always divides $2r$. Intrigued by the empirical data from the census~[{\em Ars Math.\ Contemp.} {8} (2015)] of all such graphs of order up to 1000 we pose the question of whether all examples for which $a$ does not divide $r$ are arc-transitive. We prove that the answer to this question is positive in the case when $a$ is twice an odd number. In addition, we completely characterize the tetravalent graphs admitting a half-arc-transitive group with $r = 3$ and $a=2$, and prove that they arise as non-sectional split $2$-fold covers of line graphs of $2$-arc-transitive cubic graphs. 
\end{abstract}

\maketitle

\section{Introduction}
\label{sec:intro}

This paper stems from our research of finite simple connected tetravalent graphs that admit a group of automorphisms acting transitively on vertices and edges but not on the arcs of 
the graph; such groups of automorphisms are said to be {\em half-arc-transitive}. Observe that the full automorphism group $\Aut(\Gamma)$ of such a graph $\Gamma$
is then either arc-transitive or itself half-arc-transitive. In the latter case the graph $\Gamma$ is called {\em half-arc-transitive}.

Tetravalent graphs admitting a half-arc-transitive group of automorphisms
are surprisingly rich combinatorial objects with connections to several other areas of mathematics (see, for example, 
\cite{ConPotSpa15, MarNedMaps,MarNed3, MarPis99, MarSpa08, PotSpiVerBook,genlost}). One of the most fruitful tools for analysing the structure of a tetravalent graph $\Gamma$ 
admitting a half-arc-transitive group $G$ is to study a certain $G$-invariant decomposition of the edge set $E(\Gamma)$ of $\Gamma$ into the 
{\em $G$-alternating cycles} of some even length $2r$; the parameter $r$ is then called the {\em $G$-radius} and denoted $\rad_G(\Gamma)$
 (see Section~\ref{sec:HAT} for more detailed definitions). Since $G$ is edge-transitive and the decomposition into $G$-alternating cycles
is $G$-invariant, any two intersecting $G$-alternating cycles meet in the same number of vertices; this number is then called the {\em attachment number}
and denoted $\att_G(\Gamma)$. When $G=\Aut(\Gamma)$
the subscript $G$ will be omitted in the above notation.

It is well known and easy to see that $\att_G(\Gamma)$ divides $2\rad_G(\Gamma)$. 
However, for all known tetravalent half-arc-transitive graphs the attachment number in fact divides the radius. 
This brings us to the following question that we would like to propose and address in this paper:

\begin{question}
\label{que:divides}
Is it true that the attachment number $\att(\Gamma)$ of an arbitrary tetravalent half-arc-transitive graph  $\Gamma$ divides the radius $\rad(\Gamma)$?
\end{question}

By checking the complete list of all tetravalent half-arc-transitive graphs on up to $1000$ vertices (see~\cite{PotSpiVer15}), we see the that answer to the above question is affirmative for the graphs in that range. Further, as was proved in \cite[Theorem~1.2]{MarWal00}, the question has an affirmative answer in the case $\att(\Gamma) = 2$. In Section~\ref{sec:AT}, we generalise this result by proving the following theorem.

\begin{theorem}
\label{the:AT}
Let $\Gamma$ be a tetravalent half-arc-transitive graph. If its radius $\rad(\Gamma)$ is odd, then $\att(\Gamma)$ divides $\rad(\Gamma)$. Consequently, if $\att(\Gamma)$ is not divisible by $4$, then $\att(\Gamma)$ divides $\rad(\Gamma)$.
\end{theorem}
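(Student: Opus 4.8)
The plan is to reduce everything to ruling out a single ``bad'' attachment configuration, and then to exclude that configuration by exhibiting a forbidden automorphism. Throughout write $a=\att(\Gamma)$, $r=\rad(\Gamma)$, and let $G=\Aut(\Gamma)$, which is half-arc-transitive.

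First I would record the elementary reduction of the second assertion to the first. Since $a\mid 2r$ always, the relation $a\mid 2r$ together with $a\nmid r$ forces the $2$-adic valuations to satisfy $\nu_2(a)=\nu_2(r)+1$: indeed $a\mid 2r$ already gives that the odd part of $a$ divides the odd part of $r$, so any failure of $a\mid r$ must occur at the prime $2$. Hence, if $4\nmid a$ and $a\nmid r$, then $\nu_2(a)=1$ and $\nu_2(r)=0$, i.e.\ $r$ is odd; so the first assertion immediately yields the second. It therefore suffices to prove that $r$ odd implies $a\mid r$.

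Next I would translate the divisibility into the geometry of alternating cycles. Fix the $G$-induced orientation and a $G$-alternating cycle $C=(u_0,u_1,\dots,u_{2r-1})$, with the sources of $C$ at the even positions and the sinks at the odd positions. Using the structure theory of alternating cycles (from the foundational work where $r$ and $a$ were introduced), every vertex lies on exactly two alternating cycles, and the $a$ common vertices of two adjacent alternating cycles are equally spaced on each of them, forming a single arithmetic progression of common difference $d:=2r/a$. Consequently $a\mid r$ is equivalent to $d$ being even, equivalently to all common vertices of two adjacent cycles being of the same kind (all sources of one cycle, all sinks of the other); while $a\nmid r$ is equivalent to $d$ odd, in which case the common vertices alternate between sources and sinks of $C$. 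Thus the theorem reduces to showing that this \emph{mixed} case cannot occur when $r$ is odd.

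So suppose, for contradiction, that $r$ is odd and $d$ is odd. The crux is to define the antipodal map $\theta$ that rotates each alternating cycle by a half-turn, $\theta(v)=$ the vertex lying $r$ steps away from $v$ along an alternating cycle through $v$, and to prove that it is a well-defined automorphism reversing the orientation. Well-definedness is exactly where both parities are used: since $d$ and $r$ are odd, $r/d$ is odd, so the $C$-antipode $u_r$ of $u_0$ sits at an \emph{odd} multiple of $d$ and is therefore again one of the $a$ common vertices of the two cycles through $u_0$; moreover $r\equiv -r \pmod{2r}$, so the half-turn is insensitive to the possible reversal of cyclic order between the two cycles, and the antipode of $v$ computed on either of its two cycles is the same vertex. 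Granting this, $\theta$ carries each alternating cycle onto itself by a rotation and hence maps edges to edges, so $\theta\in\Aut(\Gamma)$; and because $r$ is odd the half-turn sends even positions to odd positions, i.e.\ sources to sinks, so $\theta$ reverses the orientation. An orientation-reversing automorphism, together with the (orientation-preserving) edge-transitive $G$, makes $\Aut(\Gamma)$ transitive on arcs, contradicting that $\Gamma$ is half-arc-transitive; hence the mixed case is impossible and $a\mid r$. I expect the main obstacle to be the rigorous verification of the well-definedness of $\theta$: one must pin down the exact attachment pattern—that the $C$-antipode of a common vertex is again a common vertex and is simultaneously its antipode on the second cycle. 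This is precisely where the fine structure of the attachment set, and the existence of a $G$-automorphism swapping two adjacent alternating cycles (which exists because any two common vertices of opposite kind lie on the very same unordered pair of cycles), have to be used carefully.
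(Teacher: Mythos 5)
Your overall strategy is the same as the paper's: reduce to the case $r$ odd, $a$ even, construct the ``antipodal'' (half-turn) map on the alternating cycles, show it is an automorphism, and derive a parity contradiction with the orientation (your reduction of the second assertion to the first via $2$-adic valuations is correct, as is the final step: once $\theta$ is a well-defined automorphism, it either reverses the orientation --- making $\Gamma$ arc-transitive --- or, equivalently in the paper's phrasing, it must preserve the orientation and then sends the tail $v_0$ of $v_0v_1$ to $v_r$ with $r$ odd, a contradiction either way). However, there is a genuine gap exactly where you flag it: you never prove that the antipode of $v$ computed on $C$ equals its antipode computed on $C'$, and the remark that ``$r\equiv -r \pmod{2r}$, so the half-turn is insensitive to the possible reversal of cyclic order'' does not address the real issue. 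Knowing that $v_r\in C\cap C'$ (which holds simply because $r=(a/2)\cdot d$ is a multiple of $d$ whenever $a$ is even --- the oddness of $r/d$ is irrelevant here) only tells you that $v_r$ occupies \emph{some} position $jd$ on $C'$ relative to $v$; a priori $j$ need not equal $a/2$, and without that, $\theta$ is not well defined and the ``rotation of each cycle'' argument collapses. The auxiliary automorphism you gesture at (one \emph{swapping} $C$ and $C'$) is not the one that closes this gap.

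The paper closes it with a short fixed-point argument that you are missing: take $g\in G_v$ interchanging the two neighbours of $v$ on $C$ and also the two neighbours of $v$ on $C'$ (such $g$ exists because the restriction of the setwise stabiliser of an alternating cycle to that cycle is dihedral). Then $g$ acts on each of $C$ and $C'$ as the reflection fixing $v$. The reflection of the $2r$-cycle $C$ fixing $v$ fixes exactly one other vertex, namely the $C$-antipode $v_r$; since $v_r$ also lies on $C'$ and the reflection of $C'$ fixing $v$ fixes only $v$ and the $C'$-antipode of $v$, the vertex $v_r$ must \emph{be} the $C'$-antipode of $v$. This is Proposition~\ref{pro:transversal} of the paper, and it is the one indispensable idea your proposal lacks; everything else in your outline is sound and matches the paper's route.
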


As a consequence of our second main result (Theorem~\ref{the:main}) we see that, in contrast to Theorem~\ref{the:AT}, there exist infinitely many arc-transitive tetravalent graphs $\Gamma$ admitting a half-arc-transitive group $G$ with $\rad_G(\Gamma) = 3$ and $\att_G(\Gamma) = 2$. In fact, in Section~\ref{sec:HAT}, we characterise these graphs completely and prove the following theorem (see Section~\ref{subsec:Dart} for the definition of the dart graph).

\begin{theorem}
\label{the:main}
Let $\Gamma$ be a connected tetravalent graph. Then $\Gamma$ is $G$-half-arc-transitive for some $G \leq \Aut(\Gamma)$ with $\rad_G(\Gamma) = 3$ and $\att_G(\Gamma) = 2$ if and only if $\Gamma$ is the dart graph of some $2$-arc-transitive cubic graph.
\end{theorem}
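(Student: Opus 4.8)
The plan is to prove both implications through a precise dictionary matching the vertices, edges, and $G$-alternating cycles of $\Gamma$ with the darts, $2$-arcs, and vertices of a cubic graph $\Delta$. For the easier direction, suppose $\Delta$ is a $2$-arc-transitive cubic graph and let $\Gamma = \Dart(\Delta)$, whose vertices are the darts (ordered edges) of $\Delta$, with $(u,v)$ adjacent to $(v,w)$ whenever $w \ne u$; this is readily seen to be tetravalent. I would orient each edge of $\Gamma$ from $(u,v)$ to $(v,w)$, which gives every vertex out-degree and in-degree $2$ and is manifestly preserved by the induced action of $\Aut(\Delta)$ on darts. Taking $G$ to be the image in $\Aut(\Gamma)$ of a $2$-arc-transitive group of automorphisms of $\Delta$, its transitivity on darts and on $2$-arcs of $\Delta$ yields vertex- and edge-transitivity on $\Gamma$, while orientation-preservation rules out arc-transitivity, so $G$ is half-arc-transitive. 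I would then trace the $G$-alternating cycle through a dart and show that it consists precisely of the six darts incident with a common vertex $v$ of $\Delta$, whence $\rad_G(\Gamma) = 3$; since two such cycles share a dart exactly when the corresponding vertices of $\Delta$ are adjacent, and then share exactly the two darts lying on that edge, we obtain $\att_G(\Gamma) = 2$.

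For the converse, suppose $G \le \Aut(\Gamma)$ is half-arc-transitive with $\rad_G(\Gamma)=3$ and $\att_G(\Gamma)=2$. Fixing one of the two $G$-induced orientations, each vertex $x$ acquires two out-edges and two in-edges, lying respectively in a unique ``tail'' $G$-alternating cycle $T(x)$ and a unique ``head'' cycle $H(x)$, each of length $6$. I would define $\Delta$ to have the set of $G$-alternating cycles as its vertices, joining two of them whenever they intersect. A counting argument then shows $\Delta$ is cubic: a length-$6$ cycle $C$ has six vertices, each lying in exactly one further cycle, and any cycle meeting $C$ does so in exactly $\att_G(\Gamma)=2$ vertices, so the six vertices of $C$ fall into three pairs, each determining a distinct neighbour of $C$ in $\Delta$. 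Since $G$ permutes the alternating cycles, it acts on $\Delta$.

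The crucial identity is that if $x \to y$ is an oriented edge of $\Gamma$, then this edge is an out-edge at $x$ and an in-edge at $y$, hence lies in both $T(x)$ and $H(y)$, forcing $T(x)=H(y)$. Assigning to each vertex $x$ the dart $(H(x),T(x))$ of $\Delta$ therefore sends adjacent vertices to consecutive darts, and I would verify, using $|V(\Gamma)| = 2|E(\Delta)|$, that this assignment is a bijection onto the darts of $\Delta$ and an isomorphism $\Gamma \cong \Dart(\Delta)$ intertwining the two orientations. Finally, the edges of $\Gamma$ correspond to $2$-arcs of $\Delta$, so edge-transitivity of $G$ on $\Gamma$ translates into transitivity of $G$ on the $2$-arcs of $\Delta$, i.e.\ $\Delta$ is $2$-arc-transitive; connectivity of $\Delta$ follows from that of $\Gamma$.

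The main obstacle I anticipate is not the counting but the non-degeneracy required to make $\Delta$ a genuine simple graph, so that $\Dart(\Delta)$ is well-defined and the isomorphism above is valid. One must rule out $H(x) = T(x)$, which would collapse the dart to a loop, and more subtly exclude the backtracking configuration $H(x)=T(y)$ for an oriented edge $x \to y$, which would make the image fail to be a proper $2$-arc, as well as preclude multiple edges in $\Delta$. I expect these to be exactly the points where the precise hypotheses $\rad_G(\Gamma)=3$ and $\att_G(\Gamma)=2$ (rather than other small values) are essential, and where the non-sectional nature of the attachment must be exploited; establishing them rigorously, via a careful local analysis of how two alternating cycles of length $6$ can meet in two vertices, is where I expect most of the work to lie.
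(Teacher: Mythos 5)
Your proposal follows essentially the same route as the paper: the forward direction matches Proposition~\ref{pro:Dart_to_Alt}, and your converse---forming the graph of alternating cycles and sending each vertex $x$ to the dart $(H(x),T(x))$---is precisely the inverse of the paper's isomorphism $\Psi$, which maps $(C,C')$ to the unique vertex of $C\cap C'$ that is the head of both edges of $C$ through it. The non-degeneracy checks you defer as ``most of the work'' are in fact immediate from facts already quoted in Section~\ref{sec:HAT}: each vertex lies on exactly two distinct $G$-alternating cycles (so $H(x)\neq T(x)$), and for $\rad_G(\Gamma)=3$, $\att_G(\Gamma)=2$ two intersecting alternating cycles meet in an antipodal pair $\{v_0,v_3\}$ of a $6$-cycle, so the endpoints of an edge $x\to y$ (which are adjacent on the cycle $T(x)=H(y)$) cannot both lie in an attachment set, ruling out $H(x)=T(y)$---no appeal to anything like non-sectionality is needed, and the only other divergence is cosmetic (you obtain $2$-arc-transitivity of $\Delta$ from the edge/$2$-arc correspondence, while the paper exhibits directly an element of $G_{v_0}$ swapping the two cycles through $v_1$ and $v_5$).
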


The third main result of this paper, stemming from our analysis of the situation described by Theorem~\ref{the:main}, reveals a surprising connection to the theory of covering projections of graphs. This theory has become one of the central tools in the study of symmetries of graphs. A particularly 
thrilling development started with the seminal work of Malni\v{c}, Nedela and \v{S}koviera \cite{MalNedSko} who analysed the condition under which a given automorphism group of the base graph lifts along the covering projection. Recently, the question of determining the structure of the lifted group received a lot of attention (see \cite{FenKutMalMar,MaPo16,MaPo??}). 

To be more precise,
let $\wp \colon \tilde{\Gamma} \to \Gamma$ be a covering projection of connected graphs and let $\CT(\wp)$ be the corresponding group of covering transformations (see \cite{MalNedSko}, for example, for the definitions pertaining to the theory of graph covers).
 Furthermore, let $G \leq \Aut(\Gamma)$ be a subgroup that lifts along $\wp$. Then the lifted group $\tilde{G}$ is an extension of $\CT(\wp)$ by $G$. 
 If this extension is split then the covering projection $\wp$ is called {\em $G$-split}. The most natural way in which this can occur is that there exists a complement $\bar{G}$ of $\CT(\wp)$ in 
$\tilde{G}$ and a $\bar{G}$-invariant subset $S$ of $V(\tilde{\Gamma})$, that intersects each fibre of $\wp$ in exactly one vertex. In such a case we say that $S$ is a {\em section} for $\bar{G}$ and that $\bar{G}$ is a {\em sectional} complement of $\CT(\wp)$. Split covering projections without any sectional complement are called {\em non-sectional}. These turn out to be rather elusive and hard to analyse. To the best of our knowledge, the only known infinite family of non-sectional split covers was presented in~\cite[Section 4]{FenKutMalMar}. This family of non-sectional split covers involves cubic arc-transitive graphs of extremely large order.

In this paper we show that each connected tetravalent graph $\Gamma$ admitting a half-arc-transitive group $G$ of automorphisms such that $\att_G(\Gamma) = 2$ and $\rad_G(\Gamma) = 3$
is a $2$-fold cover of the line graph of a cubic $2$-arc-transitive graph, and that in the case when $\Gamma$ is not bipartite the corresponding covering projection is non-sectional.
This thus provides a new and rather simple infinite family of the somewhat mysterious case of non-sectional split covering projections (see Section~\ref{sec:ourcover} for more details).

\section{Half-arc-transitive group actions on graphs}
\label{sec:HAT}

In the next two paragraphs we briefly review some concepts and results pertaining half-arc-transitive group actions on tetravalent graphs that we shall need in the remainder of this section. For more details see~\cite{Mar98}, where most of these notions were introduced. 

A tetravalent graph $\Gamma$ admitting a {\em half-arc-transitive} (that is vertex- and edge- but not arc-transitive) group of automorphisms $G$ is said to be {\em $G$-half-arc-transitive}. The action of $G$ induces two paired orientations of the edges of $\Gamma$ and for any one of them each vertex of $\Gamma$ is the head of two and the tail of the other two of its incident edges. (The fact that the edge $uv$ is oriented from $u$ to $v$ will be denoted by $u \to v$.) A cycle of $\Gamma$ for which every two consecutive edges either have a common head or common tail with respect to this orientation is called a {\em $G$-alternating cycle}. Since the action of $G$ is vertex- and edge-transitive all of the $G$-alternating cycles have the same even length $2\rad_G(\Gamma)$ and any two non-disjoint $G$-alternating cycles intersect in the same number $\att_G(\Gamma)$ of vertices. These intersections, called the {\em $G$-attachment sets}, form an imprimitivity block system for the group $G$. The numbers $\rad_G(\Gamma)$ and $\att_G(\Gamma)$ are called the {\em $G$-radius} and {\em $G$-attachment number} of $\Gamma$, respectively. If $G = \Aut(\Gamma)$ we suppress the prefix and subscript $\Aut(\Gamma)$ in all of the above definitions.  

It was shown in~\cite[Proposition~2.4]{Mar98} that a tetravalent $G$-half-arc-transitive graph $\Gamma$ has at least three $G$-alternating cycles unless $\att_G(\Gamma) = 2\rad_G(\Gamma)$ in which case $\Gamma$ is isomorphic to a particular Cayley graph of a cyclic group (and is thus arc-transitive). Moreover, in the case that $\Gamma$ has at least three $G$-alternating cycles, $\att_G(\Gamma) \leq \rad_G(\Gamma)$ holds and $\att_G(\Gamma)$ divides $2\rad_G(\Gamma)$. In addition, the restriction of the action of $G$ to any $G$-alternating cycle is isomorphic to the dihedral group of order $2\rad_G(\Gamma)$ (or to the Klein 4-group in the case of $\rad_G(\Gamma) = 2$) with the cyclic subgroup of order $\rad_G(\Gamma)$ being the subgroup generated by a two-step rotation of the $G$-alternating cycle in question. In addition, if $C = (v_0, v_1, \ldots , v_{2r-1})$ is a $G$-alternating cycle of $\Gamma$ with $r = \rad_G(\Gamma)$ and $C'$ is the other $G$-alternating cycle of $\Gamma$ containing $v_0$ then $C \cap C' = \{v_{i\ell} \colon 0 \leq i < a\}$ where $a = \att_G(\Gamma)$ and $\ell = 2r/a$ (see \cite[Proposition~2.6]{Mar98} and \cite[Proposition~3.4]{MarPra99}). 
\medskip

As mentioned in the Introduction one of the goals of this paper is to characterize the tetravalent $G$-half-arc-transitive graphs $\Gamma$ with $\rad_G(\Gamma) = 3$ and $\att_G(\Gamma) = 2$. The bijective correspondence between such graphs and $2$-arc-transitive cubic graphs (see Theorem~\ref{the:main}) is given via two pairwise inverse constructions: the {\em graph of alternating cycles} construction and the {\em dart graph} construction. We first define the former.

\subsection{The graph of alternating cycles}
\label{subsec:Alt}

Let $\Gamma$ be a tetravalent $G$-half-arc-transitive graph for some $G \leq \Aut(\Gamma)$. The {\em graph of $G$-alternating cycles} $\Alt_G(\Gamma)$ is the graph whose vertex set consists of all $G$-alternating cycles of $\Gamma$ with two of them being adjacent whenever they have at least one vertex in common. We record some basic properties of the graph $\Alt_G(\Gamma)$. 

\begin{proposition}
\label{pro:gr_alt_cyc}
Let $\Gamma$ be a connected tetravalent $G$-half-arc-transitive graph for some $G \leq \Aut(\Gamma)$ having at least three $G$-alternating cycles. Then the graph $\Alt_G(\Gamma)$ is a regular graph of valence $2\rad_G(\Gamma)/\att_G(\Gamma)$ and the induced action of $G$ on  $\Alt_G(\Gamma)$ is vertex- and edge-transitive. Moreover, this action is arc-transitive if and only if $\rad_{G}(\Gamma)$ does not divide $\att_G(\Gamma)$. 
\end{proposition}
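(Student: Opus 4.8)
The plan is to handle the three assertions separately. For the valence, I would recall that each vertex of $\Gamma$ lies on exactly two $G$-alternating cycles, so that two distinct such cycles are either disjoint or meet in a single $G$-attachment set. Fixing a $G$-alternating cycle $C=(v_0,\dots,v_{2r-1})$ with $r=\rad_G(\Gamma)$, $a=\att_G(\Gamma)$ and $\ell=2r/a$, the description recalled above partitions the $2r$ vertices of $C$ into the $\ell$ attachment sets $B_j=\{v_{j},v_{j+\ell},\dots,v_{j+(a-1)\ell}\}$ (indices modulo $2r$, $0\le j<\ell$), with distinct $B_j$ giving distinct neighbours of $C$ in $\Alt_G(\Gamma)$. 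Hence $C$ has exactly $2r/a$ neighbours. As $G$ is edge-transitive on $\Gamma$ and each edge lies on a unique $G$-alternating cycle, $G$ is transitive on the vertices of $\Alt_G(\Gamma)$, so this graph is regular of valence $2r/a$. Edge-transitivity of the induced action then follows because the edges of $\Alt_G(\Gamma)$ correspond bijectively to the $G$-attachment sets, which form a block system for the vertex-transitive action of $G$ on $\Gamma$ and are thus permuted transitively.

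For the final assertion I would invoke the standard fact that a vertex-transitive action is arc-transitive if and only if a point stabiliser acts transitively on the neighbours of that point. Thus $\Alt_G(\Gamma)$ is $G$-arc-transitive if and only if the stabiliser $G_C$ acts transitively on the $\ell$ attachment sets $B_0,\dots,B_{\ell-1}$ lying in $C$. The key input is that $G_C$ induces on $C$ the dihedral group of order $2r$, generated by the two-step rotation $\rho\colon v_i\mapsto v_{i+2}$ of order $r$ together with the reflections it contains; and that, since $G$ preserves the orientation of $\Gamma$ that its action induces, every element of $G_C$ preserves the partition of $V(C)$ into the two $\langle\rho\rangle$-orbits, namely the set of heads and the set of tails of $C$.

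The rotation $\rho$ permutes the blocks by $B_j\mapsto B_{j+2}$, so its orbits on $B_0,\dots,B_{\ell-1}$ are readily described in terms of the parity of $\ell=2r/a$; the crux is then to decide whether the orientation-preserving reflections in $G_C$ fuse these $\rho$-orbits into a single orbit. I expect this orbit computation, together with the orientation constraint identified above, to be the main obstacle, since it requires a careful analysis of how the heads and tails of $C$ are distributed among the blocks and of exactly which reflections are admissible. Carrying it out — keeping careful track of the relation between $a$ and $r$ — should yield that $G_C$ acts transitively on the neighbours of $C$ precisely when $\rad_G(\Gamma)$ does not divide $\att_G(\Gamma)$, which by the reduction above is equivalent to $\Alt_G(\Gamma)$ being $G$-arc-transitive.
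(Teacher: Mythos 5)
Your first paragraph (the valence count and the vertex- and edge-transitivity of the induced action) is correct and essentially identical to the paper's argument. The problem is the second half: you reduce arc-transitivity to the transitivity of $G_C$ on the $\ell=2r/a$ attachment sets contained in $C$, correctly identify the two relevant inputs (the induced group on $C$ is dihedral of order $2r$, and every element of $G_C$ preserves the head/tail bipartition of $V(C)$ because $G$ preserves its induced orientation), and then explicitly defer the decisive orbit computation (``I expect\dots carrying it out\dots should yield\dots''). That computation \emph{is} the content of the ``moreover'' clause, so as written this is a genuine gap, not a proof. To close it along your lines: $\rho$ acts on the block indices $\ZZ_\ell$ by $j\mapsto j+2$, which is already transitive when $\ell$ is odd; when $\ell$ is even, the head/tail constraint forces every reflection in the induced group to be of the form $v_i\mapsto v_{k-i}$ with $k$ even, and such a map sends $B_j$ to $B_{(k-j)\bmod\ell}$, preserving the parity of $j$, so the two $\langle\rho\rangle$-orbits of blocks are never fused and $G_C$ is not transitive on the neighbours of $C$. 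Hence arc-transitivity holds precisely when $\ell=2r/a$ is odd. Note also that, since $a\mid 2r$ and $a\le r$, the condition ``$\ell$ odd'' is equivalent to ``$\att_G(\Gamma)$ does not divide $\rad_G(\Gamma)$''; this is what the paper's proof actually establishes, and it is \emph{not} the condition you quote in your last sentence (the statement as printed has the two parameters transposed, so carrying out your computation faithfully would have flagged this).

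For comparison, the paper reaches the same dichotomy without any stabiliser or dihedral bookkeeping: it fixes one attachment set $C\cap C'=\{v_{i\ell}:0\le i<a\}$ and observes that when $\ell$ is even every vertex of $C\cap C'$ is the tail of both of its edges on $C$ (hence the head of both of its edges on $C'$), so no orientation-preserving automorphism can interchange $C$ and $C'$; when $\ell$ is odd, any $g\in G$ with $v_0^g=v_\ell$ must interchange them (alternatively, Tutte's theorem applies since the valence $\ell$ is odd). Your stabiliser route is viable and would yield the same conclusion once the orbit analysis is actually performed, but it is the harder of the two ways to organise the argument, and in its current form the essential step is missing.
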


\begin{proof}
To simplify notation, denote $r = \rad_G(\Gamma)$ and $a = \att_G(\Gamma)$. Since each vertex of $\Gamma$ lies on exactly two $G$-alternating cycles and the intersection of any two non-disjoint $G$-alternating cycles is of size $a$ it is clear that each $G$-alternating cycle is adjacent to $\ell = 2r/a$ other $G$-alternating cycles in $\Alt_G(\Gamma)$. Moreover, since $G$ acts edge-transitively on $\Gamma$ and each edge of $\Gamma$ is contained in a unique $G$-alternating cycle, the induced action of $G$ on $\Alt_G(\Gamma)$ is vertex-transitive. That this action is also edge-transitive follows from the fact that $G$ acts vertex-transitively on $\Gamma$ and that the edges of $\Alt_G(\Gamma)$ correspond to $G$-attachment sets of $\Gamma$.

For the rest of the proof fix one of the two paired orientations of $\Gamma$ given by the action of $G$, let $C = (v_0, v_1, \ldots , v_{2r-1})$ be a $G$-alternating cycle such that $v_0 \to v_1$ and let $C'$ be the other $G$-alternating cycle containing $v_0$, so that $C \cap C' = \{v_{i\ell}\colon 0 \leq i < a\}$. Since every other vertex of $C$ is the tail of the two edges of $C$ incident to it, the vertex $v_\ell$ is the tail of the two edges of $C$ incident to it if and only if $\ell$ is even (in which case each $v_{i\ell}$ has this property). 

Now, if $\ell$ is odd, then each element of $G$, mapping $v_0$ to $v_\ell$ necessarily interchanges $C$ and $C'$, proving that in this case the induced action of $G$ on $\Alt_G(\Gamma)$ is in fact arc-transitive. We remark that this also follows from the fact, first observed by Tutte~\cite{Tutte66}, that a vertex- and edge-transitive group of automorphisms of a graph of odd valence is necessarily arc-transitive. To complete the proof we thus only need to show that the induced action of $G$ on $\Alt_G(\Gamma)$ is not arc-transitive when $\ell$ is even. Recall that in this case each vertex $v_{i\ell} \in C \cap C'$ is the tail of the two edges of $C$ incident to it. Therefore, since any element of $G$, mapping the pair $\{C, C'\}$ to itself of course preserves the intersection $C \cap C'$ it is clear that any such element fixes each of $C$ and $C'$ setwise, and so no element of $G$ can interchange $C$ and $C'$. This proves that the induced action of $G$ on $\Alt_G(\Gamma)$ is half-arc-transitive.
\end{proof}

\subsection{The dart graph and its relation to $\Alt_G(\Gamma)$}
\label{subsec:Dart}

The dart graph of a cubic graph was investigated in~\cite{HilWil12} (we remark that this construction can also be viewed as a special kind of the {\em arc graph} construction from~\cite{GR01book}). Of course the dart graph construction can be applied to arbitrary graphs but here, as in~\cite{HilWil12}, we are only interested in dart graphs of cubic graphs. We first recall the definition. Let $\Lambda$ be a cubic graph. Then its {\em dart graph} $\Dart(\Lambda)$ is the graph whose vertex set consists of all the arcs (called darts in~\cite{HilWil12}) of $\Lambda$ with $(u,v)$ adjacent to $(u', v')$ if and only if either $u' = v$ but $u \neq v'$, or $u = v'$ but $u' \neq v$. In other words, the edges of $\Dart(\Lambda)$ correspond to the $2$-arcs of $\Lambda$. Note that this enables a natural orientation of the edges of $\Dart(\Lambda)$ where the edge $(u,v)(v,w)$ is oriented from $(u,v)$ to $(v,w)$. 

Clearly, $\Aut(\Lambda)$ can be viewed as a subgroup of $\Aut(\Dart(\Lambda))$ preserving the natural orientation. Furthermore, the permutation $\tau$ of $V(\Dart(\Lambda))$, exchanging each  $(u,v)$ with $(v,u)$, is an orientation reversing automorphism of $\Dart(\Lambda)$. 
\medskip

We now establish the correspondence between the $2$-arc-transitive cubic graphs and the tetravalent graphs admitting a half-arc-transitive group of automorphisms with the corresponding radius $3$ and attachment number $2$. We do this in two steps.

\begin{proposition}
\label{pro:Dart_to_Alt}
Let $\Lambda$ be a connected cubic graph admitting a $2$-arc-transitive group of automorphisms $G$ and let $\Gamma = \Dart(\Lambda)$. Then $\Gamma$ is a tetravalent $G$-half-arc-transitive graph such that $\rad_G(\Gamma) = 3$ and $\att_G(\Gamma) = 2$ with $\Alt_G(\Gamma) \cong \Lambda$. Moreover, the natural orientation of $\Gamma$, viewed as $\Dart(\Lambda)$, coincides with one of the two paired orientations induced by the action of $G$. 
\end{proposition}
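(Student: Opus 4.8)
The plan is to verify each asserted property of $\Gamma = \Dart(\Lambda)$ in turn, resting everything on two natural bijections: the darts (arcs) of $\Lambda$ are exactly the vertices of $\Gamma$, and the $2$-arcs of $\Lambda$ are exactly the naturally oriented edges of $\Gamma$. First I would count, for a fixed dart $(u,v)$, its neighbours in $\Gamma$: a neighbour is either of the form $(v,w)$ with $w$ a neighbour of $v$ distinct from $u$, or of the form $(t,u)$ with $t$ a neighbour of $u$ distinct from $v$. Since $\Lambda$ is cubic each case yields precisely two darts, so $\Gamma$ is tetravalent; moreover under the natural orientation the first two are out-neighbours and the last two are in-neighbours, so every vertex of $\Gamma$ is the tail of two and the head of two incident edges.

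Next I would observe that any $g \in G$, acting on darts by $g\cdot(u,v) = (g(u),g(v))$, sends $2$-arcs to $2$-arcs while preserving their direction, hence is an automorphism of $\Gamma$ fixing the natural orientation. Because $G$ is $2$-arc-transitive on $\Lambda$ it is in particular arc-transitive, so it is transitive on $V(\Gamma)$; and since the oriented edges of $\Gamma$ are precisely the $2$-arcs of $\Lambda$, the $2$-arc-transitivity of $G$ yields edge-transitivity on $\Gamma$. Finally, as $G$ preserves the nontrivial natural orientation (in-degree and out-degree both equal to $2$ at each vertex), no element of $G$ can map a forward arc to its reverse, so the action is not arc-transitive. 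Thus $\Gamma$ is $G$-half-arc-transitive, and the $G$-invariant natural orientation is necessarily one of the two paired orientations induced by $G$.

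The heart of the argument, and the step I expect to require the most care, is the determination of the $G$-alternating cycles. Starting from a dart $(u,v)$ and repeatedly applying the alternating rule that consecutive edges share a head or share a tail, I would trace the resulting closed walk and show that it visits, in order, exactly the six darts incident to the vertex $v$, namely $(u,v),(v,w),(x,v),(v,u),(w,v),(v,x)$, where $u,w,x$ are the three neighbours of $v$, before returning to $(u,v)$. This identifies each $G$-alternating cycle with a vertex of $\Lambda$ and shows it has length $6$; since all $G$-alternating cycles share a common length, we get $\rad_G(\Gamma) = 3$. The bookkeeping of which in- and out-edges at each dart have already been used is the only genuinely delicate point; everything else is immediate from the orientation.

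With the $G$-alternating cycles identified with the vertices of $\Lambda$ (and there are $|V(\Lambda)| \geq 4$ of them, so the relevant structure theory applies), the remaining claims follow quickly. Two alternating cycles, corresponding to vertices $v$ and $v'$, share a dart precisely when some dart is incident to both, that is, when $v$ and $v'$ are adjacent in $\Lambda$, and in that case the shared darts are exactly $(v,v')$ and $(v',v)$; hence $\att_G(\Gamma) = 2$. The same correspondence shows that two vertices of $\Alt_G(\Gamma)$ are adjacent if and only if the corresponding vertices of $\Lambda$ are adjacent, giving the isomorphism $\Alt_G(\Gamma) \cong \Lambda$ and completing the proof.
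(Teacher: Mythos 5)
Your proof is correct and follows essentially the same route as the paper: identify the $G$-alternating cycle through a dart $(u,v)$ as the six darts incident to $v$, read off $\rad_G=3$ and $\att_G=2$, and obtain $\Alt_G(\Gamma)\cong\Lambda$ from the resulting vertex correspondence. The only real difference is that you spell out the verification of half-arc-transitivity and the orientation claim (via the observation that a $G$-invariant orientation of a $G$-half-arc-transitive graph must be one of the two paired orientations), which the paper largely delegates to a citation and a short $2$-arc-transitivity argument.
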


\begin{proof}
That the natural action of $G$ on $\Gamma$ is half-arc-transitive can easily be verified (see also~\cite{HilWil12}). Now, fix an edge $(u,v)(v,w)$ of $\Gamma$ and choose the $G$-induced orientation of $\Gamma$ in such a way that $(u,v) \to (v,w)$. Since $G$ is $2$-arc-transitive on $\Lambda$, the other edge of $\Gamma$, for which $(u,v)$ is its tail, is $(u,v)(v,w')$, where $w'$ is the remaining neighbour of $v$ in $\Lambda$ (other than $u$ and $w$). It is now clear that for each pair of adjacent vertices $(x,y)$ and $(y,z)$ of $\Gamma$ the corresponding edge is oriented from $(x,y)$ to $(y,z)$, and so the chosen $G$-induced orientation of $\Gamma$ is the natural orientation of $\Dart(\Lambda)$. 

Finally, let $v$ be a vertex of $\Lambda$ and let $u,u',u''$ be its three neighbours. The $G$-alternating cycle of $\Gamma$ containing the edge $(u,v)(v,u')$ is then clearly $C_v = ((u,v),(v,u'),(u'',v),(v,u),(u',v),(v,u''))$, implying that $\rad_G(\Gamma) = 3$. This also shows that the $G$-alternating cycles of $\Gamma$ naturally correspond to vertices of $\Lambda$. Since the three $G$-alternating cycles of $\Gamma$ that have a nonempty intersection with $C_v$ are the ones corresponding to the vertices $u$, $u'$ and $u''$, this correspondence in fact shows that $\Alt_G(\Gamma)$ and $\Lambda$ are isomorphic and that $\att_G(\Gamma) = 2$.
\end{proof}

\begin{proposition}
\label{pro:Alt_to_Dart}
Let $\Gamma$ be a connected tetravalent $G$-half-arc-transitive graph for some $G \leq \Aut(\Gamma)$ with $\rad_G(\Gamma) = 3$ and $\att_G(\Gamma) = 2$, and let $\Lambda = \Alt_G(\Gamma)$. Then the group $G$ induces a $2$-arc-transitive action on $\Lambda$ and $\Dart(\Lambda) \cong \Gamma$. In fact, an isomorphism $\Psi\colon \Dart(\Lambda) \to \Gamma$ exists which maps the natural orientation of $\Dart(\Lambda)$ to a $G$-induced orientation of $\Gamma$.
\end{proposition}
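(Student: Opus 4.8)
The plan is to mimic, in reverse, the computation in the proof of Proposition~\ref{pro:Dart_to_Alt}: I will build an explicit bijection between the vertices of $\Gamma$ and the darts of $\Lambda$ out of the source/sink data carried by a fixed $G$-induced orientation, check that it is an orientation-preserving, $G$-equivariant graph isomorphism onto $\Dart(\Lambda)$, and only then read off the $2$-arc-transitivity. To set the stage, note that $\rad_G(\Gamma)=3$ and $\att_G(\Gamma)=2$ force the $G$-alternating cycles to have length $6$ and, by the structural facts recalled at the start of this section, the two attachment vertices lying on any single $G$-alternating cycle $C=(v_0,\dots,v_5)$ to be the antipodal pair $\{v_0,v_3\}$. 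By Proposition~\ref{pro:gr_alt_cyc} the graph $\Lambda=\Alt_G(\Gamma)$ is cubic and, since $\rad_G(\Gamma)=3$ does not divide $\att_G(\Gamma)=2$, the induced action of $G$ on $\Lambda$ is arc-transitive; the task is to promote this to $2$-arc-transitivity.

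Fix one of the two paired $G$-induced orientations of $\Gamma$. Each vertex $v$ lies on exactly two $G$-alternating cycles, and on each of these it is either a \emph{source} (tail of both incident edges of that cycle) or a \emph{sink} (head of both); since $v$ is the tail of precisely two and the head of precisely two of its four incident edges, it is a source on exactly one of its two cycles and a sink on the other. I define $\Phi\colon V(\Gamma)\to V(\Dart(\Lambda))$ by sending $v$ to the dart of $\Lambda$ directed from the cycle on which $v$ is a sink to the cycle on which it is a source (this is the convention that will match the natural orientation, as the computation in Proposition~\ref{pro:Dart_to_Alt} shows). That $\Phi$ is a bijection I would check by a cardinality count together with injectivity: a length-$6$ count gives $|V(\Gamma)|=3|V(\Lambda)|$, which equals the number of darts of the cubic graph $\Lambda$; and $\Phi$ is injective because the two attachment vertices on a given cycle are antipodal, hence of opposite parity, hence carry opposite source/sink roles, so a dart $(C,C')$ has at most one preimage, namely the unique vertex of $C\cap C'$ that is a sink on $C$.

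Next I would show $\Phi$ is an orientation-preserving isomorphism onto $\Dart(\Lambda)$. Given an edge $u\to v$ of $\Gamma$, it lies on a unique $G$-alternating cycle $D$, on which $u$ is a source and $v$ a sink; writing $C_u$, $C_v$ for the other cycles through $u$, $v$, we get $\Phi(u)=(C_u,D)$ and $\Phi(v)=(D,C_v)$. Here $C_u\neq C_v$: otherwise $u$ and $v$ would both lie in the two-element attachment set $C_u\cap D$ and hence be the antipodal (non-adjacent) pair of $D$, contradicting $u\sim v$ on $D$. Thus $\Phi(u)$ and $\Phi(v)$ are adjacent in $\Dart(\Lambda)$ with the edge naturally oriented $\Phi(u)\to\Phi(v)$, so $\Phi$ carries the chosen $G$-orientation of $\Gamma$ into the natural orientation of $\Dart(\Lambda)$. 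Since $\Phi$ is injective on vertices it is injective on edges, and both graphs are tetravalent on the same number of vertices, so $\Phi$ is an isomorphism; setting $\Psi=\Phi^{-1}$ gives the required map. Equivariance is immediate because every element of $G$ preserves the chosen orientation, hence preserves the source/sink labelling, and therefore commutes with $\Phi$ relative to the induced $G$-action on $\Dart(\Lambda)$.

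Finally, $2$-arc-transitivity drops out. Under the natural orientation, the oriented edges of $\Dart(\Lambda)$ are in $G$-equivariant bijection with the $2$-arcs $(x,y,z)$ of $\Lambda$ via $(x,y)\to(y,z)\mapsto(x,y,z)$, and $\Phi$ identifies these with the oriented edges of $\Gamma$. As $G$ is edge-transitive on $\Gamma$ and preserves the chosen orientation, it is transitive on the oriented edges of $\Gamma$, hence on the $2$-arcs of $\Lambda$, which is exactly $2$-arc-transitivity. The only genuinely delicate points are pinning down the source/sink convention so that it reproduces the natural orientation and the verification that $C_u\neq C_v$; once these are in place the argument is a bijection-and-counting routine.
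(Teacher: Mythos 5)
Your proposal is correct and follows essentially the same route as the paper: the isomorphism is defined by exactly the same rule (matching a vertex with the ordered pair of its two alternating cycles according to which one it is a sink/source on), with the same verifications of well-definedness, bijectivity and orientation-compatibility. The only difference is that the paper establishes $2$-arc-transitivity first by a direct stabiliser argument (an element of $G$ fixing $v_0$ and swapping $v_1$ with $v_5$ fixes $C,C'$ and swaps $C'',C'''$), whereas you deduce it afterwards from the isomorphism via the correspondence between naturally oriented edges of $\Dart(\Lambda)$ and $2$-arcs of $\Lambda$ — a legitimate and slightly cleaner logical ordering.
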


\begin{proof}
By Proposition~\ref{pro:gr_alt_cyc} the graph $\Lambda$ is cubic and the induced action of $G$ on it is arc-transitive. Since $\rad_G(\Gamma) = 3$ and $\att_G(\Gamma) = 2$ it is easy to see that $\Gamma$ and $\Dart(\Lambda)$ are of the same order.  Furthermore, let $C = (v_0, v_1, \ldots , v_5)$ be a $G$-alternating cycle of $\Gamma$ and $C', C'', C'''$ be the other $G$-alternating cycles of $\Gamma$ containing $v_0, v_1$ and $v_5$, respectively. Then $C \cap C' = \{v_0, v_3\}$, $C \cap C'' = \{v_1, v_4\}$ and $C \cap C''' = \{v_2, v_5\}$. It is thus clear that any element of $G$, fixing $v_0$ and mapping $v_1$ to $v_5$ (which exists since $C$ is $G$-alternating and $G$ is edge-transitive on $\Gamma$), fixes both $C$ and $C'$ but maps $C''$ to $C'''$. Therefore, the induced action of $G$ on $\Lambda$ is $2$-arc-transitive. 

To complete the proof we exhibit a particular isomorphism $\Psi \colon \Dart(\Lambda) \to \Gamma$. Fix an orientation of the edges of $\Gamma$, induced by the action of $G$, and let $C$ and $C'$ be two $G$-alternating cycles of $\Gamma$ with a nonempty intersection. Then $(C,C')$ and $(C',C)$ are vertices of $\Dart(\Lambda)$. Let $C \cap C' = \{u,u'\}$ and observe that precisely one of $u$ and $u'$ is the head of both of the edges of $C$ incident to it. Without loss of generality assume it is $u$. Then of course $u'$ is the head of both of the edges of $C'$ incident to it. We then set $\Psi((C,C')) = u$ and $\Psi((C',C)) = u'$. Therefore, for non-disjoint $G$-alternating cycles $C$ and $C'$ of $\Gamma$ we map $(C,C')$ to the unique vertex in $C \cap C'$ which is the head of both of the edges of $C$ incident to it. Since each pair of non-disjoint $G$-alternating cycles meets in precisely two vertices and each vertex of $\Gamma$ belongs to two $G$-alternating cycles of $\Gamma$, this mapping is injective and thus also bijective. We now only need to show that it preserves adjacency and maps the natural orientation of $\Dart(\Lambda)$ to the chosen $G$-induced orientation of $\Gamma$. To this end let $C$, $C'$ and $C''$ be three $G$-alternating cycles of $\Gamma$ such that $C$ has a nonempty intersection with both $C'$ and $C''$. Recall that then the edge $(C',C)(C,C'')$ is oriented from $(C',C)$ to $(C,C'')$ in the natural orientation of $\Dart(\Lambda)$. Denote $C = (v_0,v_1, \ldots , v_5)$ and without loss of generality assume $C \cap C' = \{v_0,v_3\}$ and $C \cap C'' = \{v_1, v_4\}$. 

Suppose first that $v_0 \to v_1$. Then $v_0$ is the head of both of the edges of $C'$ incident to it, and so $\Psi((C',C)) = v_0$. Similarly, $v_1$ is the head of both of the edges of $C$ incident to it, and so $\Psi((C,C'')) = v_1$. If on the other hand $v_1 \to v_0$, then $\Psi((C',C)) = v_3$ and $\Psi((C,C'')) = v_4$. In both cases, $\Psi$ maps the oriented edge $(C',C)(C,C'')$ to an oriented edge of $\Gamma$, proving that it is an isomorphism of graphs, mapping the the natural orientation of $\Dart(\Lambda)$ to the chosen $G$-induced orientation of $\Gamma$.
\end{proof}

Theorem ~\ref{the:main} now follows directly from Propositions~\ref{pro:Dart_to_Alt} and \ref{pro:Alt_to_Dart}.

\section{Partial answer to Question~\ref{que:divides} and proof of Theorem~\ref{the:AT}}
\label{sec:AT}

In this section we prove Theorem~\ref{the:AT} giving a partial answer to Question~\ref{que:divides}. We first prove an auxiliary result.

\begin{proposition}
\label{pro:transversal}
Let  $\Gamma$ be a tetravalent $G$-half-arc-transitive graph with $\att_G(\Gamma)$ even. Then for each vertex $v$ of $\Gamma$ and the two $G$-alternating cycles $C$ and $C'$, containing $v$, the antipodal vertex of $v$ on $C$ coincides with the antipodal vertex of $v$ on $C'$. Moreover, the involution $\tau$ interchanging each pair of antipodal vertices on all $G$-alternating cycles of $\Gamma$ is an automorphism of $\Gamma$ centralising $G$.
\end{proposition}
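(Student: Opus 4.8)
Throughout, write $r=\rad_G(\Gamma)$, $a=\att_G(\Gamma)$ and $\ell=2r/a$, and recall that the $G$-attachment sets partition $V(\Gamma)$ into blocks of size $a$ for the $G$-action. Fix a vertex $v$ and the two $G$-alternating cycles $C=(v_0,v_1,\ldots,v_{2r-1})$ and $C'=(w_0,w_1,\ldots,w_{2r-1})$ through it, with $v=v_0=w_0$, so that the attachment block containing $v$ is $A=C\cap C'=\{v_{k\ell}\colon 0\le k<a\}$. Since $a$ is even, the antipode $v_r$ of $v$ on $C$ equals $v_{(a/2)\ell}$ and hence lies in $A$. The heart of the argument, and the step I expect to be the main obstacle, is to show that this same vertex is also the antipode of $v$ on $C'$; the remaining assertions will then follow formally.

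For this key step I would exploit the half-arc-transitivity to produce a suitable element of $G_v$. Because $C$ is $G$-alternating, the two edges of $C$ at $v$ are either both out-edges or both in-edges, so by the transitivity of $G_v$ on the two out-edges (and on the two in-edges) at $v$ there is $g\in G_v$ interchanging the two neighbours of $v$ on $C$. As $g$ preserves the $G$-induced orientation and stabilises $C$ setwise, it must act on $C$ as the reflection $v_i\mapsto v_{-i}$ (indices mod $2r$). Restricted to $A$ this is $v_{k\ell}\mapsto v_{-k\ell}$, which fixes exactly those $k$ with $a\mid 2k$, namely $k=0$ and $k=a/2$; thus $g$ fixes precisely $v_0$ and $v_r$ within $A$.

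Now $g$ fixes $v=w_0$, so it permutes the two alternating cycles through $v$, and since it fixes $C$ setwise it must also fix $C'$ setwise. Hence $g$ restricts to an element of the dihedral image of $G_{C'}$ on $C'$ (the Klein four-group when $r=2$) that fixes $w_0$, and the only such elements are the identity and the reflection $w_j\mapsto w_{-j}$ --- both of which fix the antipode $w_r$ of $v$ on $C'$. Therefore $g$ fixes $w_r$; but $w_r\in A$ and $w_r\neq w_0=v_0$, so by the previous paragraph $w_r=v_r$. This proves that the antipodes of $v$ on $C$ and on $C'$ coincide, and consequently the antipodal map $\tau$ is a well-defined involution on $V(\Gamma)$. (The degenerate case $a=2r$, where $\Gamma$ has fewer than three alternating cycles and is a circulant, is immediate and can be handled separately.)

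It remains to check that $\tau$ is an automorphism centralising $G$. For any alternating cycle $C$ and any $v_i\in C$, well-definedness gives $\tau(v_i)=v_{i+r}$, so $\tau$ restricts to the antipodal rotation on each alternating cycle and therefore maps the edge $\{v_i,v_{i+1}\}$ to the edge $\{v_{i+r},v_{i+r+1}\}$; as every edge lies in a unique alternating cycle and $\tau$ is a bijection, $\tau$ preserves adjacency and is an automorphism. Finally, any $g\in G$ carries each alternating cycle $C$ to the alternating cycle $gC$ respecting its cyclic structure, hence sends the antipode of a vertex $u$ on $C$ to the antipode of $gu$ on $gC$; this says precisely that $g\tau(u)=\tau g(u)$ for every $u$, so $\tau$ centralises $G$.
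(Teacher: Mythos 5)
Your proof is correct and follows essentially the same route as the paper's: both produce an element $g\in G_v$ that reflects $C$ at $v$, observe that $g$ must also stabilise $C'$ and hence fix the antipode of $v$ there, and conclude that the two antipodes coincide, after which the automorphism and centralising claims follow exactly as you argue. The only (harmless) difference is that you require $g$ to swap only the neighbours of $v$ on $C$ and then note that any automorphism of the cycle $C'$ fixing $v$ fixes its antipode, whereas the paper chooses $g$ to interchange both pairs of neighbours at once.
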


\begin{proof}
Denote $r = \rad_G(\Gamma)$ and $a = \att_G(\Gamma)$. Let $v$ be a vertex of $\Gamma$ and let $C$ and $C'$ be the two $G$-alternating cycles of $\Gamma$ containing $v$. Denote $C = (v_0, v_1, \ldots , v_{2r-1})$ with $v = v_0$. Recall that then $C \cap C' = \{v_{i\ell}\colon 0 \leq i < a\}$, where $\ell = 2r/a$. Since $a$ is even $v_r \in C \cap C'$. Now, take any element $g \in G_v$ interchanging $v_1$ with $v_{2r-1}$ as well as the other two neighbours of $v$ (which are of course neighbours of $v$ on $C'$). Then $g$ reflects both $C$ and $C'$ with respect to $v$. Since $v_r$ is antipodal to $v$ on $C$, it must be fixed by $g$, but since $v_r$ is also contained in $C'$, this implies that it is in fact also the antipodal vertex of $v$ on $C'$. This shows that for each $G$-alternating cycle $C$ and each vertex $v$ of $C$ the vertex $v$ and its antipodal counterpart on $C$ both belong to the same pair of $G$-alternating cycles (this implies that the $G$-transversals, as they were defined in~\cite{Mar98}, are of length $2$) and are also antipodal on the other $G$-alternating cycle containing them. 

It is now clear that $\tau$ is a well defined involution on the vertex set of $\Gamma$. Since the antipodal vertex of a neighbor $v_1$ of $v = v_0$ on $C$ is the neighbor $v_{r+1}$ of the antipodal vertex $v_r$, it is clear that $\tau$ is in fact an automorphism of $\Gamma$. Since any element of $G$ maps $G$-alternating cycles to $G$-alternating cycles it is clear that $\tau$ centralises $G$. 
\end{proof}

We are now ready to prove Theorem~\ref{the:AT}. Let $\Gamma$ be a tetravalent half-arc-transitive graph. Denote $r = \rad(\Gamma)$ and $a = \att(\Gamma)$, and assume $r$ is odd. Recall that $a$ divides $2r$. We thus only need to prove that $a$ is odd. Suppose to the contrary that $a$ is even, and so by assumption $a \equiv 2 \pmod{4}$. Then the graph $\Gamma$ admits the automorphism $\tau$ from Proposition~\ref{pro:transversal}. Now, fix one of the two paired orientations of the edges induced by the action of $\Aut(\Gamma)$ and let $C = (v_0, v_1, \ldots , v_{2r-1})$ be an alternating cycle of $\Gamma$ with $v_0$ being the tail of the edge $v_0 v_1$. Since $v_0^\tau = v_r$ and $v_1^\tau = v_{r+1}$ it follows that $v_r$ is the tail of the edge $v_rv_{r+1}$. But since $r$ is odd this contradicts the fact that every other vertex of $C$ is the tail of the two edges of $C$ incident to it. Thus $a$ is odd, as claimed.

To prove the second part of the theorem assume that $a$ is not divisible by $4$. If $r$ is even then the fact that $a$ divides $2r$ implies that $a$ divides $r$ as well. If however $r$ is odd, we can apply the first part of the theorem. This completes the proof.

\section{An infinite family of non-sectional split covers}
\label{sec:ourcover}

As announced in the introduction, tetravalent $G$-half-arc-transitive graphs $\Gamma$ with $\rad_G(\Gamma) =3$ and $\att_G(\Gamma)=2$
yield surprising examples of the elusive non-sectional split covers. In this section, we present this connection in some detail.

\begin{theorem}
\label{the:cover}
Let $\Gamma$ be a connected non-bipartite $G$-half-arc-transitive graph $\Gamma$ of order greater than $12$ 
with $\rad_G(\Gamma) =3$ and $\att_G(\Gamma)=2$. Then there exists a $2$-fold covering projection $\wp \colon \Gamma \to \Gamma'$
and an arc-transitive group $H\le \Aut(\Gamma')$ which lifts along $\wp$ in such a way that $\Gamma$ is a non-sectional $H$-split cover of $\Gamma'$.
\end{theorem}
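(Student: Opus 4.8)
The plan is to build the covering projection $\wp$ from the structure already established: by Theorem~\ref{the:main}, $\Gamma \cong \Dart(\Lambda)$ for a $2$-arc-transitive cubic graph $\Lambda = \Alt_G(\Gamma)$, and moreover $\Gamma$ admits the central involution $\tau$ from Proposition~\ref{pro:transversal} (since $\att_G(\Gamma) = 2$ is even). This $\tau$ is precisely the orientation-reversing automorphism exchanging each dart $(u,v)$ with $(v,u)$. I would first observe that $\tau$ has no fixed vertices and, working vertex-transitively, show that $\langle \tau \rangle$ acts semiregularly on $V(\Gamma)$. The quotient $\Gamma' = \Gamma / \langle \tau \rangle$ is then a tetravalent graph, the orbit map $\wp \colon \Gamma \to \Gamma'$ is a $2$-fold covering projection with $\CT(\wp) = \langle \tau \rangle$, and the order bound $|V(\Gamma)| > 12$ (equivalently $|V(\Lambda)| > 6$) rules out the small degenerate cases where $\wp$ fails to be a genuine covering (multiple edges or short cycles collapsing). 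Since $\tau$ centralises $G$, the group $G$ descends to an action on $\Gamma'$, and because $\tau$ reverses the $G$-induced orientation while $G$ preserves it, the lifted group $\tilde{G} = \langle G, \tau \rangle = G \times \langle \tau \rangle$ visibly contains a complement to $\CT(\wp)$, namely $G$ itself; this makes $\wp$ a $G$-split cover. To obtain an arc-transitive group $H$ on $\Gamma'$ one notes that $\Gamma'$ now carries both orientations identified, so the image of $\tilde{G}$ (equivalently of $G$) acts arc-transitively on $\Gamma'$; set $H$ to be this induced group.

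The heart of the theorem is the non-sectional claim, and this is where I would concentrate the real work. Suppose for contradiction that $\wp$ admits a sectional complement: a subgroup $\bar{G} \le \tilde{G}$ complementing $\CT(\wp) = \langle \tau \rangle$ together with a $\bar{G}$-invariant section $S \subseteq V(\Gamma)$ meeting each fibre $\{x, x^\tau\}$ in exactly one vertex. Translating through the dart-graph picture, the fibres are exactly the pairs $\{(u,v), (v,u)\}$, so a section $S$ amounts to a choice, for each edge $\{u,v\}$ of $\Lambda$, of exactly one of its two orientations --- that is, $S$ is an orientation of the underlying graph $\Lambda$. The $\bar{G}$-invariance of $S$, combined with the fact that $\bar{G}$ projects onto all of $H$ (hence induces the full arc-transitive group on $\Lambda$), forces this orientation of $\Lambda$ to be invariant under an arc-transitive group. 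I would then derive a contradiction by a parity/bipartiteness argument: an arc-transitive-invariant orientation of a connected graph makes every vertex a pure source or pure sink (by Tutte's observation that edge- and vertex-transitivity on odd valence forces arc-transitivity, applied here to the orientation-preserving stabiliser structure), and a consistent source/sink $2$-colouring of the vertices of $\Lambda$ exists if and only if $\Lambda$ is bipartite. Finally I would connect the bipartiteness of $\Lambda$ back to that of $\Gamma$: since $\Gamma = \Dart(\Lambda)$ and the hypothesis is that $\Gamma$ is \emph{non-bipartite}, I would show $\Lambda$ non-bipartite $\iff$ $\Gamma$ non-bipartite, contradicting the existence of such a section.

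I expect the main obstacle to be the precise translation between the abstract covering-theoretic notion of a sectional complement and the concrete combinatorial object (an invariant orientation of $\Lambda$), and in particular verifying that every candidate sectional complement $\bar{G}$, not merely a conveniently chosen one, projects onto the full arc-transitive group $H$ and hence forces the orientation to be $H$-invariant. A subtle point is that $\bar{G}$ need not equal $G$: it is \emph{some} complement in $\tilde{G} = G \times \langle\tau\rangle$, and such complements are parametrised by homomorphisms $G \to \langle\tau\rangle$, i.e.\ by index-at-most-$2$ subgroups of $G$. I would handle this by checking that any such complement still induces an arc-transitive group on the quotient (the homomorphism to $\langle\tau\rangle$ only records how elements flip the two orientations, and does not shrink the image in $\Aut(\Gamma')$), so the invariant-orientation obstruction applies uniformly. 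The remaining steps --- semiregularity of $\tau$, that $\wp$ is a genuine $2$-fold cover for $|V(\Lambda)| > 6$, and the source/sink parity argument --- are routine once the correct dictionary is set up.
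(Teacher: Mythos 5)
Your construction of the covering projection $\wp$ and of the split lifted group $\tG = G \times \la\tau\ra$ matches the paper's, but your treatment of the crucial non-sectionality claim is genuinely different. The paper disposes of it in one line by citing \cite[Proposition 3.3]{FenKutMalMar}: a sectional complement would force $\Gamma$ to be the canonical double cover of $\Gamma'$, hence bipartite. You instead argue directly: a section is a choice of one dart per edge of $\Lambda$, i.e.\ an orientation of $\Lambda$, and you aim to show that invariance under a complement $\bar{G}$ forces $\Lambda$ (hence $\Gamma \cong \Dart(\Lambda)$) to be bipartite. This is a legitimate, self-contained alternative, and you correctly isolate the key subtlety that $\bar{G}$ is one of the complements $\{g\phi(g): g\in G\}$ parametrised by homomorphisms $\phi\colon G\to\la\tau\ra$. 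What your route buys is independence from the canonical-double-cover machinery of \cite{FenKutMalMar}; what the paper's route buys is brevity.

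One step of your sketch is stated imprecisely and must be repaired before it counts as a proof. The orientation $S$ is \emph{not} ``invariant under an arc-transitive group'': if $\phi$ is trivial, then $\bar{G}=G$ preserves $S$, and since the arc-transitive group $G$ maps some dart to its reverse, this case dies immediately, with no bipartiteness needed. The substantive case is $\phi$ nontrivial, where only the index-two subgroup $G_0=\ker\phi$ preserves $S$ while elements outside $G_0$ reverse it. There $G_0$ need not be arc-transitive (it may not even be vertex-transitive), so Tutte's parity observation does not apply as you state it. What does work is that the stabiliser $(G_0)_v$ still acts transitively on the three neighbours of $v$, since it has index at most $2$ in $G_v$, whose local action is $S_3$ by the $2$-arc-transitivity established in Proposition~\ref{pro:Alt_to_Dart}; hence the out-degree of $v$ in $S$ is $0$ or $3$, i.e.\ every vertex is a pure source or a pure sink. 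Sources and sinks then form a bipartition of $\Lambda$, and \cite[Proposition 9]{HilWil12} transfers bipartiteness to $\Gamma$, the desired contradiction. With that local argument inserted, your proof is complete.
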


\begin{proof}
Since $\att_G(\Gamma)=2$, each $G$-attachment set consists of a pair of antipodal vertices on a $G$-alternating cycle of $\Gamma$.
Let $\cB$ be the set of all $G$-attachment sets in $\Gamma$.
By Proposition~\ref{pro:transversal}, there exists an automorphism $\tau$ of $\Gamma$ centralising $G$,
 which interchanges the two vertices in each element of $\cB$. Let $\tG = \la G, \tau\ra$ and note that 
 $\tG$ acts transitively on the arcs of $\Gamma$. Since $\tau$ is an involution centralising $G$
 not contained in $G$, we see that $\tG = G \times \la \tau \ra$.
 
 Let $\Gamma'$ be the quotient graph with respect to the group $\la \tau \ra$, that is, the graph whose vertices are the orbits of $\la \tau \ra$
 and with two such orbits adjacent whenever they are joined by an edge in $\Gamma$. Since $\tG$ is arc-transitive and $\langle \tau\rangle$ is normal in $\tG$, each $\la \tau \ra$-orbit is
 an independent set. Moreover, if two $\la \tau \ra$-orbits $B$ and $C$ are adjacent in $\Gamma'$, then the induced subgraph $\Gamma[B\cup C]$
 is clearly vertex- and arc-transitive and is thus either $K_{2,2}$ or $2K_2$. In the former case, it is easy to see that $\Gamma$ is isomorphic to
 the lexicographic product of a cycle with the edge-less graph on two vertices. Since $\rad_G(\Gamma) = 3$ and the orbits of $\langle \tau\rangle$ coincide with the elements of $\cB$, this implies that $\Gamma$
 has only $6$ vertices, contradicting our assumption on the order of $\Gamma$. This contradiction implies that
  $\Gamma[B\cup C] \cong 2K_2$ for any pair of adjacent $\la \tau \ra$-orbits $B$ and $C$, and hence the quotient projection
  $\wp \colon \Gamma \to \Gamma'$ is a $2$-fold covering projection with $\la \tau \ra$ being its group of covering transformations.
  
  Since $\tau$ normalises $G$, the group $\tG$ projects along $\wp$ and the quotient group
  $H = \tG/ \la \tau \ra$ acts faithfully as an arc-transitive group of automorphisms on $\Gamma'$. 
  In particular, since the group of covering projection $\la \tau \ra$ has a complement $G$ in $\tG$, the covering projection $\wp$ is
  $H$-split.
  
  By \cite[Proposition 3.3]{FenKutMalMar}, if $\wp$ had a sectional complement with respect to $H$, then $\Gamma$  would be a canonical double cover of $\Gamma'$, contradicting the assumption that $\Gamma$ is not bipartite.
  \end{proof}  
  
  {\sc Remark.} In \cite[Proposition~9]{HilWil12} it was shown that a cubic graph $\Lambda$ is bipartite if and only if $\Dart(\Lambda)$ is bipartite. Since there exist infinitely many connected non-bipartite cubic $2$-arc-transitive graphs, Theorem~\ref{the:main} thus implies
  that there are indeed infinitely many connected non-bipartite $G$-half-arc-transitive graphs $\Gamma$ with $\rad_G(\Gamma) =3$ and $\att_G(\Gamma)=2$.
  In view of Theorem~\ref{the:cover}, these yield infinitely many non-sectional split covers, as announced in the introduction. Furthermore, note that
  the $G$-alternating $6$-cycles in the graph $\Gamma$ appearing in the proof of the above theorem
    project by $\wp$ to cycles of length $3$, implying that $\Gamma'$ is a tetravalent arc-transitive graph of girth $3$. Since
it  is assumed that the order of $\Gamma$ is larger than $12$ (and thus the order of $\Gamma'$ is larger than $6$), we may now use
  \cite[Theorem 5.1]{girth4} to conclude that $\Gamma'$ is isomorphic to the line graph of a $2$-arc-transitive cubic graph.
\bigskip

\noindent
{\bf Acknowledgment.} The first author was supported in part by Slovenian Research Agency, program P1-0294. The second author was supported in part by Slovenian Research Agency, program P1-0285 and projects N1-0038, J1-6720 and J1-7051.

\end{document}